 \newtheorem{theorem}{Theorem}[section]
\theoremstyle{definition}
\theoremstyle{remark}
\newtheorem{fact*}{Fact}
\newcommand{\til}{\raise.17ex\hbox{$\scriptstyle\mathtt{\sim}$}}
\newcommand\beq{\begin{equation}}
\newcommand\eeq{\end{equation}}
\newcommand{\bbm}{\left[ \begin{smallmatrix}}
\newcommand{\ebm}{\end{smallmatrix} \right]}
\newcommand{\bpm}{\left( \begin{smallmatrix}}
\newcommand{\epm}{\end{smallmatrix} \right)}
\numberwithin{equation}{section}
\newlength{\Mheight}
\newlength{\cwidth}
\newcommand{\dfn}[1]{{\bf #1}\index{#1}}
\title[Rational noncommutative Positivstellensatz\"e]{Positivstellensatz\"e for noncommutative rational expressions}
\author{
J. E. Pascoe 
}
\address{Department of Mathematics, Washington University in St. Louis, 1 Brookings Drive, Campus Box 1146, St. Louis, MO 63130, USA.}
\email{pascoej@math.wustl.edu}
\thanks{Research supported by NSF Mathematical Science Postdoctoral Research Fellowship DMS 1606260.}
\date{\today}
\keywords{Noncommutative rational function, positive rational function, Hilbert’s 17th problem, noncommutative Positivstellensatz}
 \subjclass[2010]{Primary 13J30, 16K40, 47L07; Secondary 15A22, 26C15, 47A63}
\begin{document}

\begin{abstract}
We derive some Positivstellensatz\"e for noncommutative rational expressions from the Positivstellensatz\"e for
noncommutative polynomials. Specifically, we show that if a noncommutative rational expression is positive on a polynomially convex set, then there is an algebraic certificate witnessing that fact. As in the case of noncommutative polynomials, our results are nicer when we additionally assume positivity on a convex set-- that is, we obtain a so-called ``perfect Positivstellensatz" on convex sets.
\end{abstract}
\maketitle

\section{Introduction}
%\tableofcontents

%\input{herglotz_ARCHIVAL}
%\input{introduction}
We consider the positivity of noncommutative rational functions on polynomially convex
sets. The theory on positive noncommutative polynomials has been well-studied
\cite{heltonPositive, helmc04, helkm12}, essentially inspired by the operator theoretic methods from the theory of positive (commutative) polynomials
on polynomially convex sets originating in the work  \cite{schmu91, put93}. We note that going from the polynomial to the rational case is less clear than in the noncommutative case because we cannot ``clear denominators," as it were.

A \dfn{noncommutative polynomial} (over $\mathbb{C}$) in $d$-variables is an element of the free associative algebra over $\mathbb{C}$ in the noncommuting letters $x_1, \ldots, x_d.$
For example $1000x_1x_2x_1 - x_2^2$ and $x_1^2 + x_1x_2$ are noncommutative  polynomials in two variables.
A \dfn{matricial noncommutative  polynomial} is a matrix with noncommutative  polynomial entries. For example,
	$$\bbm
		7i & 1000x_1x_2x_1 - x_2^2 \\
		x_1^2 + x_1x_2 & 0
	\ebm$$
is a matricial noncommutative  polynomial. We define an involution $*$ on matricial noncommutative  polynomials to be the involution which treats each $x_i$ as a self-adjoint variable. For example,
	$$\bbm
		7i & 1000x_1x_2x_1 - x_2^2 \\
		x_1^2 + x_1x_2 & 0
	\ebm^* = \bbm
		-7i & x_1^2 + x_2x_1 \\
		1000x_1x_2x_1 - x_2^2 & 0
	\ebm.$$

We say a  collection $\mathcal{P}$ of square matricial noncommutative  polynomials is \dfn{Archimedian} if
$\mathcal{P}$ contains elements of the form $C_i - x_i^2$ for some real numbers $C_i$
and each element of $\mathcal{P}$ is self-adjoint.

Let $\mathcal{H}$ be the infinite dimensional separable Hilbert space.
For a self-adjoint operator $T$, we say $T \geq 0$ if $T$ is positive semidefinite, we say $T > 0$ if $T$ is strictly positive definite in the sense that the spectrum of $T$ is contained in $(0,\infty)$.
We define $$\mathcal{D}_\mathcal{P} = \{ X \in B(\mathcal{H})^d | p(X) \geq 0, \forall p \in \mathcal{P}, X_i = X_i^*\}.$$
%Furthermore, 
%we define $$\mathcal{D}_\mathcal{P}(n) = \{ X \in B(\mathbb{C}^n)^d | p(X) \geq 0, \forall p \in \mathcal{P}\}.$$

Previously, Helton and McCullough showed the following Positivstellensatz for matricial
noncommutative  polynomials.
\begin{theorem}[Helton, McCullough \cite{helmc04}]\label{HMC}
Let $\mathcal{P}$ be an Archimedian collection of matricial noncommutative  polynomials.
Let $q$ be a square matricial noncommutative  polynomial.
If $q > 0$ on $\mathcal{D}_\mathcal{P},$
then 
	$$q = \sum_{\text{finite}} s_i^*s_i + \sum_{\text{finite}} r_j^*p_jr_j$$
where $s_i, r_j$ are all matricial noncommutative  polynomials and $p_j \in \mathcal{P}.$
\end{theorem}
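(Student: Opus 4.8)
The plan is to show that the strict positivity of $q$ forces $q$ to lie in the quadratic module $M$ generated by $\mathcal{P}$, namely the convex cone of all finite sums $\sum s_i^* s_i + \sum r_j^* p_j r_j$ with $s_i, r_j$ matricial polynomials and $p_j \in \mathcal{P}$; membership $q \in M$ is exactly the desired certificate. First I would record that $M$ is a convex cone of symmetric matricial polynomials containing every nonnegative scalar multiple of the identity (take $s_i = cI$), and that the Archimedean hypothesis makes the identity an algebraic interior point of $M$: given any symmetric $v$, one chooses $C$ with $C - v \in M$ (assembled from the generators $C_i - x_i^2$), and then $1 - tv = (1 - tC) + t(C - v) \in M$ for all small $t>0$. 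Thus $M$ has nonempty algebraic interior, with $1$ interior.

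Next I would argue by contradiction, supposing $q \notin M$. Since $M$ is a convex cone with the identity in its algebraic interior, the Hahn--Banach separation theorem (in the form valid for cones possessing an algebraic interior point) produces a nonzero real-linear functional $\lambda$ on the symmetric matricial polynomials with $\lambda \ge 0$ on $M$ and $\lambda(q) \le 0$; because the identity is interior and $\lambda \neq 0$ we have $\lambda(1) > 0$, and after rescaling $\lambda(1) = 1$, so $\lambda$ is a state. Extending $\lambda$ to all matricial polynomials through real and imaginary parts, positivity on the squares $a^* a \in M$ makes $\lambda$ a positive functional.

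The heart of the argument is the GNS construction applied to $\lambda$. The sesquilinear form $\langle a, b \rangle = \lambda(b^* a)$ is positive semidefinite; quotienting by its kernel and completing yields a separable Hilbert space on which left multiplication by $x_i$ descends to a densely defined symmetric operator $X_i$. Here the Archimedean hypothesis does the essential work twice: the relation $C_i - x_i^2 \in \mathcal{P}$ gives $\langle (C_i - X_i^2)\xi, \xi\rangle \ge 0$ on a dense set, so each $X_i$ extends to a bounded self-adjoint operator with $\|X_i\|^2 \le C_i$; and for $p \in \mathcal{P}$ the inequality $\lambda(a^* p a)\ge 0$ shows $p(X)\ge 0$, so that $X = (X_1,\dots,X_d) \in \mathcal{D}_\mathcal{P}$. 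Since $q > 0$ on $\mathcal{D}_\mathcal{P}$, the operator $q(X)$ has spectrum in $(0,\infty)$, hence $q(X) \ge \delta$ for some $\delta > 0$. Evaluating $\lambda$ on $q$ then recovers a value of the form $\langle q(X)\xi,\xi\rangle \ge \delta\,\lambda(1) = \delta > 0$, contradicting $\lambda(q)\le 0$. Therefore $q \in M$, which is the claimed representation.

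I expect the principal obstacles to be technical rather than conceptual. The first is the bookkeeping for matrix-valued polynomials: the GNS "vector" $\xi$ is really a finite tuple (a matrix-state, Stinespring-type version of the construction), and one must keep the rectangular shapes of $s_i$, $r_j$, $p_j$ compatible so that $q = \sum s_i^* s_i + \sum r_j^* p_j r_j$ typechecks. The second is realizing the tuple $X$ inside the fixed separable infinite-dimensional $\mathcal{H}$ of the statement, rather than on the abstract GNS space: this requires checking that the representation space embeds into $\mathcal{H}$ (up to ampliation) and that $X \in \mathcal{D}_\mathcal{P}$ and the strict inequality $q(X) > 0$ survive the embedding. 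The strict positivity of $q$ is precisely what lets the argument conclude $q \in M$ exactly, with no denominators and no limiting or closure step, yielding the ``perfect'' form of the Positivstellensatz.
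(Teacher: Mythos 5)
This theorem is quoted in the paper from \cite{helmc04} without proof, so there is no internal argument to compare against; your proposal reconstructs the standard separation-plus-GNS argument, which is essentially the original Helton--McCullough proof, and it is correct in outline. The one step you compress that carries real content is the claim that for every symmetric $v$ there is a $C$ with $C - v \in M$: this is the standard ``algebra of bounded elements'' lemma (the set of $a$ such that $C - a^*a \in M$ for some scalar $C$ is a subalgebra containing the generators $x_i$ and the scalars), not merely a direct assembly from the $C_i - x_i^2$; once that is in place, the Eidelheit-type separation at the algebraic interior point $1$, the GNS construction yielding bounded self-adjoint $X_i$ with $\|X_i\|^2 \le C_i$ and $X \in \mathcal{D}_{\mathcal{P}}$, and the ampliation into the fixed separable $\mathcal{H}$ all go through as you describe.
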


\section{The rational positivstellensatz}

A \dfn{noncommutative  rational expression} is a syntactically correct expression involving
	$+, (, ), ^{-1}$ the letters $x_1, \ldots, x_d$ and scalar numbers.
We say two nondegenerate expressions are \dfn{equivalent} if they agree on the intersection of their domains.
(Nondegeneracy means that the expression is defined for at least one input, or equivalently that the domain is a dense set with interior. That is, examples such as $0^{-1}$ are disallowed.)
Examples of noncommutative  rational expressions include
		$$ 1, x_1x_1^{-1}, 1+ x_2(8x_1^3x_2x_1+8)^{-1}.$$
We note that the first two are equivalent.

A \dfn{matricial noncommutative  rational expression} is a matrix with noncommutative  rational expression entries.

%We say a free rational function $q$ is \dfn{stable} on $\mathcal{D}_\mathcal{P}$
%if% there is an expression for $q$ such each term of the form $g(x)^{-1}$ occurring
%in that expression is bounded on $\mathcal{D}_\mathcal{P}.$ For example, if we wanted
%to analyze
%	$(x_1x_2 + x_2x_1+ x_1^{-1} +1)^{-1} + x_1,$ we would require that norms of
%$(x_1x_2 + x_2x_1+ x_1^{-1} +1)^{-1}$ and $x^{-1}$  are bounded on $\mathcal{D}_{\mathcal{P}}.$
%A matricial free rational function is stable if each of its entries are stable.

We show the following theorem.
\begin{theorem}\label{SoSthm}
Let $\mathcal{P}$ be an Archimedian collection of noncommutative  polynomials.
Let $q$ be a square matricial noncommutative  rational expression defined on all of $\mathcal{D}_{\mathcal{P}}$.
If the noncommutative rational expression $q > 0$ on $\mathcal{D}_\mathcal{P},$
then
	\beq \label{SoS}
	q \equiv \sum_{\text{finite}} s_i^*s_i + \sum_{\text{finite}} r_j^*p_jr_j
	\eeq
where $s_i, r_j$ are all matricial noncommutative  rational expressions defined on $\mathcal{D}_{\mathcal{P}}$ and $p_j \in \mathcal{P}.$
\end{theorem}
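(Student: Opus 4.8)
The plan is to reduce \eqref{SoS} to the polynomial Positivstellensatz (Theorem \ref{HMC}) by exhibiting a \emph{polynomial} certificate of positivity hidden inside the rational expression $q$. Concretely, I will look for a square matricial noncommutative polynomial $P$ that is strictly positive on $\mathcal{D}_{\mathcal{P}}$, together with a matricial rational expression $W$ defined on all of $\mathcal{D}_{\mathcal{P}}$, such that $q \equiv W^* P W$. Granting this, Theorem \ref{HMC} applied to $P$ yields polynomials with $P = \sum \sigma_i^* \sigma_i + \sum \rho_j^* p_j \rho_j$, and conjugating by $W$ gives $q \equiv \sum (\sigma_i W)^*(\sigma_i W) + \sum (\rho_j W)^* p_j (\rho_j W)$, which is exactly \eqref{SoS} with $s_i = \sigma_i W$ and $r_j = \rho_j W$; these are rational expressions defined on $\mathcal{D}_{\mathcal{P}}$ because $W$ is. Since $q > 0$ presupposes $q \equiv q^*$, I may assume $q$ is self-adjoint throughout.

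To build $P$ and $W$ I would first invoke the realization theory of noncommutative rational expressions: by induction on the syntax of $q$ (handling $+$, $\times$, and $(\cdot)^{-1}$ by the standard descriptor-realization rules and then symmetrizing) one obtains a symmetric linear-pencil realization $q \equiv D_0 + \hat C \hat\Lambda^{-1} \hat C^*$, where $D_0 = D_0^*$ and $\hat C$ are polynomial and $\hat\Lambda = \hat\Lambda^*$ is an affine linear pencil invertible on all of $\mathcal{D}_{\mathcal{P}}$ (this is where ``$q$ defined on all of $\mathcal{D}_{\mathcal{P}}$'' enters). Setting $W = \begin{pmatrix} I \\ \hat\Lambda^{-1}\hat C^* \end{pmatrix}$ and $P_0 = \begin{pmatrix} D_0 - \hat C\hat C^* & \tfrac12 \hat C \\ \tfrac12 \hat C^* & \hat\Lambda^2 \end{pmatrix}$, a direct computation using $\hat\Lambda\hat\Lambda^{-1} = I$ gives $W^* P_0 W \equiv q$. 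The point is that I have deliberately replaced the possibly indefinite pencil $\hat\Lambda$ by the \emph{positive} polynomial $\hat\Lambda^2$ in the corner, at the cost that $P_0$ itself need not be positive.

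The crux is then to repair the positivity of $P_0$ without disturbing $W^* P_0 W$. I would add a large multiple of a polynomial that annihilates the range of $W$: let $v = \begin{pmatrix} \hat C \\ -\hat\Lambda \end{pmatrix}$ and $R = v v^*$, a positive-semidefinite polynomial matrix whose kernel is exactly $\operatorname{ran} W$ (because $\hat\Lambda$ is invertible). Then $RW = 0$, so $W^*(P_0 + tR)W \equiv q$ for every scalar $t$. It remains to choose $t$ so large that $P := P_0 + tR > 0$ on $\mathcal{D}_{\mathcal{P}}$: on $\ker R = \operatorname{ran} W$ the form $P_0$ already agrees with $q$ and so is bounded below, while on the orthogonal complement $tR$ eventually dominates $P_0$; a routine two-subspace estimate then yields $P > 0$, and Theorem \ref{HMC} finishes the argument as in the first paragraph.

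The main obstacle is making this last estimate \emph{uniform} over $\mathcal{D}_{\mathcal{P}}$, i.e.\ producing a single constant $t$ that works simultaneously for all $X \in \mathcal{D}_{\mathcal{P}}$. This needs uniform bounds $q(X) \ge \epsilon I$, $\hat\Lambda(X)^* \hat\Lambda(X) \ge \delta I$, and $\|\hat\Lambda(X)^{-1}\hat C(X)^*\| \le \kappa$ for all $X \in \mathcal{D}_{\mathcal{P}}$. The Archimedean hypothesis bounds every $X$, hence every polynomial in $X$, so the only real content is the uniform lower bounds, namely uniform invertibility of $\hat\Lambda$ and uniform strict positivity of $q$. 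I expect to obtain these by a compactness/direct-sum argument that exploits the fixed infinite-dimensional separable $\mathcal{H}$: if, say, $\inf_X \hat\Lambda(X)^*\hat\Lambda(X) = 0$, one assembles a sequence of near-kernel witnesses into a single tuple $X \in \mathcal{D}_{\mathcal{P}}$ on a direct sum (which again lives on $\mathcal{H}$) at which $\hat\Lambda(X)$ fails to be bounded below, contradicting that $q$ is defined on all of $\mathcal{D}_{\mathcal{P}}$; the bound for $q$ is identical. Establishing this uniformity, together with the symmetric realization used above, are the two places where care is genuinely required; everything else is algebra and an appeal to Theorem \ref{HMC}.
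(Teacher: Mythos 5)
Your argument is essentially correct, but it takes a genuinely different route from the paper. The paper adjoins a new variable $u_j$ for each inverse $g_j^{-1}$ appearing in $q$, augments $\mathcal{P}$ by the relations $\pm[1-u_jg_j]^*[1-u_jg_j]$ (and the reversed versions) together with bounds $D_j-u_j^*u_j$, applies Theorem \ref{HMC} in the enlarged variable set, substitutes $u_j=g_j^{-1}$ back, and disposes of the $D_j-u_j^*u_j$ terms by a strong induction on the number of inversions. You instead introduce no new variables: you package all the inverses into a single symmetric realization $q\equiv D_0+\hat C\hat\Lambda^{-1}\hat C^*$, exhibit $q\equiv W^*PW$ with $P$ a genuinely \emph{positive polynomial} on $\mathcal{D}_{\mathcal{P}}$, and apply Theorem \ref{HMC} once. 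Your algebra checks out ($W^*P_0W\equiv q$, $v^*W=0$, and in fact $\operatorname{ran}W(X)$ and $\operatorname{ran}v(X)$ are \emph{orthogonal} complements since $(\operatorname{ran}W)^\perp=\ker W^*=\operatorname{ran}v$, which makes your two-subspace estimate cleaner than you suggest), and the direct-sum argument for the uniform bounds is legitimate and is in fact also needed, silently, in the paper's proof to choose the constants $D_j$. The one place where you lean on substantial external machinery is the realization step: you need a self-adjoint pencil $\hat\Lambda$ that is invertible at every \emph{operator} tuple in the domain of the expression $q$, not merely at matrix points or generically. This is true --- the recursive linearization of the syntax tree is valid in any unital algebra, and symmetrization via the antidiagonal doubling $\bigl(\begin{smallmatrix}0&\Lambda^*\\ \Lambda&0\end{smallmatrix}\bigr)$ preserves invertibility --- but it is a nontrivial theorem that must be cited or constructed, whereas the paper's proof is self-contained modulo Theorem \ref{HMC}. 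What your approach buys in exchange is a single, non-inductive application of the polynomial Positivstellensatz and a more structured certificate in which every denominator is $\hat\Lambda^{-1}$; what the paper's approach buys is elementarity and the ability to adapt directly to the hereditary case.
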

\begin{proof}
We let $g_j(x)$ be such that the term $g_j(x)^{-1}$ occurs in $q.$
The proof will go by strong induction on the number of such terms.
Define $$\mathcal{O} = \mathcal{P} \cup \{ \pm[1 - u_jg_j(x)]^*[1 - u_jg_j(x)],
 \pm[1 - g_j(x)u_j]^*[1 - g_j(x)u_j]\}
\cup \{D_j - u_j^*u_j\}$$
where $D_j$ are positive real scalars chosen to be large enough so that $D_j - [g_j(x)^{-1}]^* g_j(x)^{-1}$
is positive on $\mathcal{D}_{\mathcal{P}}.$

We now define a self-adjoint noncommutative  polynomial $\hat{q}(x,u)$ so that
$\hat{q}(x,g) = q(x).$
Now $\hat{q}$ is a noncommutative  polynomial in terms of $x_i$ and $u_j.$
Moreover, in terms of the $x_i$ and $u_j,$ we see that $q(x,u)$ is positive on
$\mathcal{D}_{\mathcal{O}},$
so by Theorem \ref{HMC},
	$$\hat{q} = \sum s_i^*s_i + \sum r_j^*o_jr_j$$
for some $o_j \in \mathcal{O}.$
We now analyze each term of the form $t_j = r_j^*o_jr_j.$
We need to show that $t_j(x,g)$ is of the form \eqref{SoS}.
If $o_j \in \mathcal{P}$, we are fine.
If $o_j = \pm[1 - u_jg_j(x)]^*[1 - u_jg_j(x)],$ we are also fine, since $t_j(x,g) = 0,$ and similarly for
the reversed case.
If $o_j = D_j - [u_j]^* u_j$ we note that
	$$o_j(x,g) = D_j - [g_j(x)^{-1}]^*g_j(x)^{-1} =
[g_j(x)^{-1}]^*[D_jg_j(x)^*g_j(x) - 1]g_j(x)^{-1},$$
and since $D_jg_j(x)^*g_j(x) - 1>0$ on $\mathcal{D}_{\mathcal{P}},$
by induction it is of the form \eqref{SoS}, so we are done.
\end{proof}
We note that the same proof can be adapted for the hereditary case in \cite{helmc04}. Moreover, we note that this implies the Agler model theory for rational functions on polynomially convex sets established variously in \cite{Ball2006, agmc_gh}.

%Moreover, we note that, morally speaking, the issue of a rational expression being defined on a neighborhood of $\mathcal{D}_{\mathcal{P}}$ can be solved by realization theory in many cases. For example if $\mathcal{P}$ is empty, this was solved in \cite{KPV}.

\section{The convex perfect rational positivstellensatz}
It is important to note that in Theorem \ref{HMC} and Theorem \ref{SoSthm}, the complexity of the sum of squares representation is unbounded and we needed strict inequality. Specifically, in \eqref{SoS},
the number of terms in each sum and the degree of each $s_i$ and $r_j$ are not bounded in the statement of the theorem.
However, Helton, Klep and McCullough \cite{helkm12} showed that
bounds do exist when we additionally assume that $\mathcal{D}_{\mathcal{P}}$ is convex and contains $0$ and moreover that $\mathcal{P}$  consists of a single monic linear pencil, $L$, a self-adjoint linear matrix polynomial such that $L(0)$ is the identity.
We note that for any finite set $\mathcal{P}$ of noncommutative polynomials
such that $\mathcal{D}_{\mathcal{P}}$  is convex and contains $0$, there exists such an $L$ \cite{heltmc12}.

Our goal is to prove the following:
\begin{theorem}
	Let $L$ be a monic linear pencil.
	Suppose  $\mathcal{D}_{\{L\}}$ is convex.
Let $r$ be a square matricial noncommutative  rational expression defined on all of $\mathcal{D}_{\{L\}}$.
The noncommutative rational expression $r \geq 0$ on all of  $\mathcal{D}_{\{L\}}$ if and only if
 \beq
	r \equiv \sum_{\text{finite}} s_i^*s_i + \sum_{\text{finite}} r_j^*L r_j
	\eeq
	where $s_i, r_j$ are all matricial noncommutative  rational expressions defined on all of $\mathcal{D}_{\{L\}}.$
\end{theorem}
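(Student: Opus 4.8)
The reverse implication is immediate: since $L \geq 0$ throughout $\mathcal{D}_{\{L\}}$, each summand $s_i^*s_i$ and $r_j^*Lr_j$ is positive semidefinite at every point of the domain, and all the $s_i, r_j$ are assumed defined on all of $\mathcal{D}_{\{L\}}$; hence any expression of the stated form is $\geq 0$ there. So the content is the forward implication, and the plan is to imitate the proof of Theorem \ref{SoSthm}, but feeding it the convex perfect Positivstellensatz of Helton, Klep and McCullough \cite{helkm12} in place of the strict Theorem \ref{HMC}, and upgrading the conclusion from strict to nonstrict positivity.

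As in Theorem \ref{SoSthm}, I would let $g_j(x)$ denote the expressions inverted in $r$, introduce fresh variables $u_j$, and manufacture a self-adjoint noncommutative polynomial $\hat r(x,u)$ with $\hat r(x,g) = r$. Running the same strong induction on the number of inverses, the outer task reduces to representing $\hat r$ on the extended constraint set built from $L$, the equalities $u_jg_j(x) = 1$ (imposed through $\pm[1-u_jg_j]^*[1-u_jg_j]$), and the bounds $D_j - u_j^*u_j$; after back-substitution $u_j \mapsto g_j^{-1}$ the equality terms vanish and each bound term becomes $[g_j^{-1}]^*[D_jg_j^*g_j - 1]g_j^{-1}$, whose middle factor is a polynomial that is $\geq 0$ on $\mathcal{D}_{\{L\}}$ and is dispatched at a lower level of the induction. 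The base case is a matrix polynomial positive on the genuine spectrahedron $\mathcal{D}_{\{L\}}$, exactly the setting of \cite{helkm12}, which returns precisely the single-$L$ shape $\sum s_i^*s_i + \sum r_j^*Lr_j$.

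The main obstacle is that this reduction breaks the spectrahedral hypothesis of \cite{helkm12}: the extended set is cut out not by a single monic linear pencil but by $L$ together with the bilinear relations $u_jg_j = 1$ and the quadratic bounds $D_j - u_j^*u_j$, so it is neither convex nor a spectrahedron and the convex Positivstellensatz cannot be applied to it verbatim. The resolution I would pursue is to never apply the convex theorem on the extended set at all: rather, use the bilinear relations only as generators of the ideal they cut out (their contribution is engineered to vanish under back-substitution), handle the quadratic bounds purely through the induction as above, and invoke \cite{helkm12} solely at the polynomial base level, where the relevant domain is the honest spectrahedron $\mathcal{D}_{\{L\}}$. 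The delicate point is to verify that peeling one inverse at a time preserves the perfect single-$L$ form, rather than reintroducing extraneous positivity constraints into the certificate.

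Finally, to obtain the nonstrict conclusion I would argue by perturbation. Since $r \geq 0$ on $\mathcal{D}_{\{L\}}$ gives $r + \eps \geq \eps I$ there, the strict machinery above applies to each $r + \eps$; the essential use of convexity is that \cite{helkm12} furnishes representations of uniformly bounded complexity (bounded number of terms and bounded degree) independent of $\eps$. I would then let $\eps \to 0$, extract a convergent subsequence of certificates from the resulting finite-dimensional coefficient space, and identify the limit as a representation of $r$ in the desired form. Securing these uniform bounds across the inverse-clearing reduction, so that the limit genuinely exists and remains a finite sum, is the crux that distinguishes this convex statement from Theorem \ref{SoSthm}.
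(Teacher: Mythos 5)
Your reduction correctly identifies the central obstacle---once the inverses are replaced by fresh variables $u_j$ and the relations $u_jg_j=1$, $D_j-u_j^*u_j$ are adjoined, the resulting constraint set is no longer a convex spectrahedron, so \cite{helkm12} does not apply---but neither of your two proposed workarounds is actually carried out, and the second one would fail. The paper's proof does not induct on inverses at all and does not use \cite{helkm12}; it uses the Helton--Klep--Nelson Positivstellensatz \cite{HKN}, which is a \emph{perfect} (nonstrict) Positivstellensatz for polynomials positive on the intersection of a convex set with a real variety. Concretely: one builds a closure $\mathcal{C}_r$ of subexpressions of $r$, forms the polynomial relations $\mathcal{M}_r=\{g_j(x)u_jb-b\}$, and considers the variety $\mathcal{Z}_r$ of triples $(X,U,v)$ with $m(X,U)v=0$ and $L(X)\geq 0$; a recursive argument shows $\tilde r(X,U)v=r(X)v$ there, so $\tilde r\geq 0$ on $\mathcal{Z}_r$, and \cite{HKN} applied with convex set $\{(X,U)\mid L(X)\geq 0\}$ (still convex, being a cylinder over $\mathcal{D}_{\{L\}}$) yields $\tilde r=\sum\tilde s_i^*\tilde s_i+\sum\tilde r_j^*L\tilde r_j+\sum(\iota_k^*m_k+m_k^*\iota_k)$, whose ideal part vanishes under the back-substitution $u_j\mapsto g_j^{-1}$. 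This is exactly the ``treat the relations as an ideal'' idea you gesture at, but it requires a variety-aware convex Positivstellensatz as the engine; \cite{helkm12} alone cannot absorb the relations, and the quadratic bounds $D_j-u_j^*u_j$ cannot be ``handled purely through the induction,'' since each inductive layer would again present you with a perfect-positivity problem on a non-spectrahedral set.

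The perturbation step is a second, independent gap. Replacing $r$ by $r+\eps$ and extracting a limit as $\eps\to 0$ requires not only degree and term-count bounds but also norm bounds on the certificate coefficients that are uniform in $\eps$; degree bounds alone do not prevent the coefficients from blowing up as the positivity margin degenerates, and the strict machinery of Theorem \ref{SoSthm} (built on Theorem \ref{HMC}) provides no complexity bounds whatsoever to feed into such a compactness argument. You flag both the uniformity of the bounds and the preservation of the single-$L$ form under the induction as ``the crux,'' which is an accurate self-assessment: those are precisely the points at which the argument is open. The paper sidesteps both by using a tool that is already perfect (it certifies $\geq 0$, not $>0$) and already accommodates the variety, so no $\eps$-limit and no inverse-by-inverse induction are needed.
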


\begin{proof}
Given an expression $r(x),$ we consider the expression $\tilde{r}(x,u)$ where each $g_j(x)^{-1}$ occurring in $r$ has been replaced by $u_j$ as in the proof of Theorem \ref{SoSthm}.

%We first consider the set of left subexpressions of our rational expression $r.$
%The set of {left subexpressions} is
First we consider the minimal set $\mathcal{C}_r$ of rational expressions such that:
\begin{enumerate}
	\item $ab \in \mathcal{C}_r \Rightarrow b \in \mathcal{C}_r,$
	\item $(a+b)c \in \mathcal{C}_r \Rightarrow ac \in \mathcal{C}_r, 
	bc \in \mathcal{C}_r,$
	\item $a+b \in \mathcal{C} \Rightarrow a \in \mathcal{C}_r, 
	b \in \mathcal{C}_r,$
	\item $a^{-1}b \in \mathcal{C}_r \Rightarrow aa^{-1}b\in \mathcal{C}_r.$
\end{enumerate}
From $\mathcal{C}_r$,
form a set
%of {semantic left subexpressions}
 $\tilde{\mathcal{C}}_r$
by replacing each occurence of $g_j(x)^{-1}$
in elements of $\mathcal{C}_r$ with a new symbol $u_j.$
We define the set of
% {meaningful relations} 
 $\mathcal{M}_r$ to be
	$$\mathcal{M}_r = \{ g_j(x)u_jb - b | g_j(x)u_jb \in \tilde{\mathcal{C}}_r\}.$$
	Define
			$$\mathcal{Z}_r = \{(X, U, v)| m(X, U)v = 0, m\in \mathcal{M}_r, L(X)\geq 0\}.$$
	We note that for $(X, U, v) \in \mathcal{Z}_r$
	and $\tilde{a}(x,u) \in \tilde{\mathcal{C}_r},$
	one can show
	we have that $\tilde{a}(X,U)v = \tilde{a}(x,g(X)^{-1})v$
	via a recursive argument.
	We see that $\tilde{r}(x, u)$
	satisfies
		 $$\langle r(X)v , v \rangle = \langle \tilde{r}(X, U)v , v \rangle \geq 0,$$
	on $\mathcal{Z}_r$
	since $\tilde{r}(X,U)v = r(X)v$ on $\mathcal{Z}_r$ by construction.
	Now, we apply the Helton-Klep-Nelson convex Positivstellensatz\cite[Theorem 1.9]{HKN}, where the variety is given by $\mathcal{Z}_r$ and the convex set is $\{(X,U) | L(X)\geq 0\}$, to get that:
	$$\tilde{r}(x, u) = \sum \tilde{s}_i^*\tilde{s}_i + \sum \tilde{r}_j^* L \tilde{r}_j + \sum \iota_k^* m_k + m_k^* \iota_k$$
	where each $\iota_k$ is in the real radical of the ideal 
	generated by the elements of $\mathcal{M}_r.$ That is,
	each $\iota_k(X,U)v$ vanishes on $\mathcal{Z}_r.$
	So, substituting $g_j(x)^{-1}$ for $u_j$ we get that
	$$r(x) \equiv \sum s_i^*s_i + \sum r_j^* L r_j.$$
\end{proof}

We note that we could have proved a bit more: that on the variety $\mathcal{Z}_r$ that $\tilde{r}$ is positive and given by a sum of squares. This would essentially correspond to the so-called
Moore-Penrose evaluation in \cite{KPV}. Moreover, we note that the main result on positive rational functions, the noncommutative analogue of Artin's solution to Hilbert's seventeenth problem, that regular positive rational expressions are sums of squares\cite{KPV}, follows from our present theorem by taking an empty monic linear pencil, in fact, we obtain a slightly better matricial version of that result. Moreover, one has size bounds inherited from the Helton-Klep-Nelson convex Positivstellensatz \cite{HKN}, that is, checking that a noncommutative rational expression is effective using the algorithms given in\cite{HKN}.

\bibliography{references}
\bibliographystyle{plain}

\end{document}